\theoremstyle{plain}
\newtheorem{thm}{Theorem}[section]
\newtheorem{cor}[thm]{Corollary}
\newtheorem{lemma}[thm]{Lemma}
\theoremstyle{definition}
\newtheorem{defn}[thm]{Definition}
\newtheorem{rem}[thm]{Remark}
\numberwithin{equation}{section}
\newcommand{\sph}{S}
\newcommand{\cn}{\mathbb{C}^n}
\newcommand{\cpn}{\mathbb{CP}^{n-1}}
\newcommand{\si}{\sigma}
\newcommand{\wsn}{\widehat{\sigma}_n}
\newcommand{\Dbb}{\mathbb{D}}
\newcommand{\Tbb}{\mathbb{T}}
\newcommand{\Nbb}{\mathbb{N}}
\newcommand{\zz}{\mathbb{Z}_+^2}
\newcommand{\spn}{S_n}
\newcommand{\za}{\zeta}
\newcommand{\de}{\delta}
\newcommand{\er}{\varepsilon}
\newcommand{\la}{\lambda}
\newcommand{\al}{\alpha}
\newcommand{\riz}{\Pi}
\newcommand{\EE}{\mathcal{E}\!}
\newcommand{\Hau}{\mathcal{H}\!}
\newcommand{\MM}{M_+}
\newcommand{\tn}{\mathbb{T}^n}
\newcommand{\spec}{\textrm{spec}}
\begin{document}

\date{}

\author{Evgueni Doubtsov}
\address{St.~Petersburg Department
of Steklov Mathematical Institute, Fontanka 27, St.~Petersburg 191023, Russia}
\email{dubtsov@pdmi.ras.ru}

\title[Dimensions of Riesz products]{Dimensions of Riesz products and pluriharmonic measures}

\begin{abstract}
We estimate the energy and Hausdorff dimensions of the Riesz products
on the unit sphere of $\mathbb{C}^n$, $n\ge 2$.
Also, we obtain similar results for the pluriharmonic measures on the torus.  
\end{abstract}

\keywords{Riesz product, energy dimension, Hausdorff dimension, pluriharmonic measure.}

\subjclass[2020]{Primary 42A55; Secondary 28A78, 31C10, 43A85}

\maketitle

\section{Introduction}\label{s_int}

The present paper is motivated by the following general question:
how does the spectrum of a measure $\mu$ affect the size of support of $\mu$?
We primarily 
consider Riesz product measures and related objects 
on the unit sphere $S = \spn = \{\za\in\cn : |\za| = 1\}$, $n \ge 2$,
therefore, we start by introducing the spaces $H(p,q)$, $(p,q)\in\zz$.

\subsection{Complex spherical harmonics}
Let $\mathcal{U}(n)$ denote the group of unitary operators on the Hilbert space $\cn$, $n\ge 2$.
Observe that $\spn = \mathcal{U}(n)/\mathcal{U}(n-1)$, hence, $\spn$ is a homogeneous space.
General constructions of abstract harmonic analysis are explicitly implemented
on $\spn$ in terms of the spaces $H(p,q)$, $(p,q)\in\zz$. 

\begin{defn}
Fix a dimension $n$, $n\ge 2$.
Let $H(p, q)= H(p,q; n)$ denote the space of all homogeneous harmonic polynomials
of bidegree $(p, q) \in \zz$.
By definition, this means that the polynomials under consideration have degree
$p$ in $z_1, z_2, \dots, z_n$, degree $q$ in 
$\overline{z}_1, \overline{z}_2, \dots, \overline{z}_n$,
and have total degree $p+q$.

For the restriction of $H(p, q)$ on $S$, one uses the same symbol.
The elements of $H(p, q)$ are often called \textit{complex} spherical harmonics.
\end{defn}

\subsection{Riesz products on the sphere}
Zygmund's dichotomy \cite{Zy} suggests that the homogeneous holomorphic polynomials introduced by
Ryll and Wojtaszczyk~\cite{RW83} could provide examples
of singular Riesz-type products on the sphere.
We say that $\{R_j\}_{j=1}^\infty$
 is a Ryll--Wojtaszczyk sequence with a constant $\delta \in (0, 1)$ if
\begin{itemize}
\item $R_j \in H(j, 0)$, that is, $R_j$ is a homogeneous holomorphic polynomial of degree~$j$,
\item $\|R_j\|_{L^\infty(\sph)} = 1$,
\item $\|R_j\|_{L^2(S)} \ge \delta$ for all $j =1,2,\dots$.
\end{itemize}

\begin{defn}\label{d_Rpair}
Let $R = \{R_j\}_{j=1}^\infty$ be a Ryll--Wojtaszczyk sequence,
$J = \{j_k\}_{k=1}^\infty\subset \Nbb$, $j_{k+1}/j_k \ge 3$,
and $a = \{a_k\}_{k=1}^\infty \subset \Dbb$.
Then $(R,J,a)$ is called a Riesz triple.
\end{defn}

Each Riesz triple $(R, J, a)$ generates a (standard) Riesz product.
Namely, the Riesz product
$\riz(R, J, a)$ is defined by the formal equality
\[
\riz(R, J, a) = \prod_{k=1}^\infty
\left(
\frac{\overline{a}_k \overline{R}_{j_k}}{2}
+ 1 +
\frac{a_k R_{j_k}}{2}
\right),
\]
where the product converges in the weak*-sense (see \cite{DouAIF}, \cite{D24singRiz} for details).

There exists a fairly rich set of singular Riesz products on the sphere.
Indeed, if $(R, J, a)$ is a Riesz pair with $a\notin\ell^2$, then Corollary~1 from \cite{D24singRiz} provides
a sequence $U=\{U_j\}_{j=1}^\infty$, $U_j \in \mathcal{U}(n)$, such that $\riz(R\circ U, J, a)$
is singular with respect to Lebesgue measure on $\spn$.

For a singular Riesz product, it is natural to ask about its dimension.
In the present paper, we estimate the energy and Hausdorff dimensions of the Riesz products
on the sphere.
Let $\dim_\Hau \riz$ denote the Hausdorff dimension of a measure $\riz$.
We obtain, in particular, the following result.

 \begin{thm}\label{t_Haus}
Let $(R, J, a)$ be a Riesz triple.
Then 
\[
\dim_\Hau \riz (R, J, a) \ge 2n -1 - \limsup_{k\to \infty}
\left(
\frac{1}{2\log j_k}
\sum_{\ell=1}^{k-1}
|a_\ell|^2 
\right).
\]
\end{thm}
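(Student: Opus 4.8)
The plan is to bound below the \emph{energy} dimension of $\riz=\riz(R,J,a)$ and then invoke the standard inequality $\dim_\Hau\riz\ge\dim_E\riz$ (a probability measure of finite $s$-energy cannot be carried by a set of vanishing $s$-dimensional Hausdorff measure). For $0<s<2n-1$ consider the Euclidean Riesz energy
\[
I_s(\riz)=\int_\sph\int_\sph\frac{d\riz(\za)\,d\riz(\eta)}{|\za-\eta|^{s}} .
\]
Since the kernel $|\za-\eta|^{-s}=2^{-s/2}\big(1-\operatorname{Re}\langle\za,\eta\rangle\big)^{-s/2}$ is $\mathcal U(n)$-invariant, it admits a zonal expansion whose blocks are the reproducing kernels of the spaces $H(p,q)$. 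Writing $\mathbf P_{p,q}$ for the orthogonal projection onto $H(p,q)$, this yields a Parseval-type identity
\[
I_s(\riz)=\sum_{(p,q)\in\zz}\lambda_{p,q}(s)\,\big\|\mathbf P_{p,q}\riz\big\|_{L^2(\sigma)}^2 ,
\]
with positive coefficients satisfying $\lambda_{p,q}(s)\asymp(p+q)^{\,s-(2n-1)}$ as $p+q\to\infty$. First I would record these two classical facts (the zonal expansion of the Euclidean Riesz kernel on $\sph\subset\cn$ and the Gegenbauer coefficient asymptotics), which reduce the problem to estimating the block norms of $\riz$.

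Next I would read off those block norms from the product structure. Expanding the partial product $\riz_N=\prod_{k=1}^N(1+u_k)$, $u_k=\tfrac12\big(a_kR_{j_k}+\overline a_k\overline R_{j_k}\big)$, into words $W_F=\prod_{k\in F^+}R_{j_k}\prod_{k\in F^-}\overline R_{j_k}$ with coefficients $c_F$, the lacunarity $j_{k+1}/j_k\ge3$ forces the bidegrees $\big(\sum_{k\in F^+}j_k,\sum_{k\in F^-}j_k\big)$ of distinct words to be distinct, so the $W_F$ are pairwise orthogonal and lie in separate blocks. As each block is finite dimensional and $\riz_N\to\riz$ weak${}^*$, the blocks stabilize and $\mathbf P_{p,q}\riz=\mathbf P_{p,q}\riz_N$ for $N$ large. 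The Ryll--Wojtaszczyk normalization $\|R_{j_k}\|_{L^\infty(\sph)}=1$ gives $\|W_F\|_{L^2(\sigma)}\le\|W_F\|_{L^\infty(\sph)}\le1$, whence
\[
\sum_{p+q=D}\big\|\mathbf P_{p,q}\riz\big\|_{L^2(\sigma)}^2=\sum_{\deg W_F=D}|c_F|^2\,\|W_F\|_{L^2(\sigma)}^2\le\sum_{\deg W_F=D}|c_F|^2 ,
\]
where $\deg W_F=\sum_{k\in F^+\cup F^-}j_k$. Grouping words by their largest active index $m$, the lacunarity gives $\deg W_F\in[j_m,\tfrac32 j_m]$, these degree ranges are disjoint for distinct $m$ (since $\tfrac32 j_m<3j_m\le j_{m+1}$), and $\lambda_{p,q}(s)\asymp j_m^{\,s-(2n-1)}$ there. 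Hence
\[
I_s(\riz)\ \lesssim\ \sum_{m\ge1}\Big(\tfrac12|a_m|^2\prod_{k<m}\big(1+\tfrac12|a_k|^2\big)\Big)\,j_m^{\,s-(2n-1)}
\]
(plus the harmless constant $D=0$ term). This is precisely where the constant $\tfrac12$ of the theorem is produced: the mass $\sum|c_F|^2=\prod_k\big(1+\tfrac12|a_k|^2\big)$ comes from the crude estimate $\|W_F\|_\infty\le1$ in place of the exact word norms.

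Finally I would read off the convergence. Using $\prod_{k<m}\big(1+\tfrac12|a_k|^2\big)\le\exp\big(\tfrac12\sum_{k<m}|a_k|^2\big)$ and $|a_m|\le1$, the $m$-th term is at most $\exp\big(\tfrac12\sum_{k<m}|a_k|^2+(s-(2n-1))\log j_m\big)$. If
\[
s<(2n-1)-\limsup_{m\to\infty}\frac{1}{2\log j_m}\sum_{k<m}|a_k|^2 ,
\]
then for some $\eta>0$ the exponent is $\le-\eta\log j_m$ for all large $m$, so that term is $\lesssim j_m^{-\eta}$; as $j_m\ge 3^{\,m-1}j_1$ grows geometrically, the series converges and $I_s(\riz)<\infty$. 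Thus $\dim_E\riz$, and therefore $\dim_\Hau\riz$, is at least the right-hand side above, which is exactly Theorem~\ref{t_Haus}.

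The main obstacle is the first step: pinning down the sign and the precise order of growth of the zonal coefficients $\lambda_{p,q}(s)$ of the Euclidean Riesz kernel on the complex sphere, and justifying the energy--Parseval identity together with the passage $\mathbf P_{p,q}\riz_N\to\mathbf P_{p,q}\riz$ (lower semicontinuity of the energy under weak${}^*$ limits). Once these harmonic-analytic facts are in place, the combinatorial orthogonality of the words $W_F$ and the uniform bound $\|W_F\|_{L^\infty(\sph)}\le1$ make the remaining estimates routine.
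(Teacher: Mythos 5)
Your proposal follows essentially the same route as the paper: the paper also bounds $\dim_\Hau$ below by $\dim_\EE$ and controls the energy via exactly the Parseval-type formula you propose to derive (it is quoted verbatim as Theorem~3.1 of Hare--Roginskaya \cite{HR03Ark}, so your ``main obstacle'' is a known cited result), then expands the Riesz product into words indexed by $\er_\ell\in\{0,\pm1\}$ whose frequencies are separated by the lacunarity $j_{k+1}/j_k\ge 3$, bounds the coefficient mass by $\frac{|a_k|^2}{2}\prod_{\ell<k}\bigl(1+\frac{|a_\ell|^2}{2}\bigr)$ using $\|R_{j}\|_\infty=1$, and sums geometrically, in fact proving the sharper bound with the exponent $\al_0$ of \eqref{e_a0}, of which the stated theorem is the corollary via $\log(1+x)\le x$. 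One small correction to your bookkeeping: a word $W_F$ of bidegree $(P,Q)$ with $Q>0$ is not harmonic, so it does not lie in the single block $H(P,Q)$ but spreads over $H(P-\ell,Q-\ell)$ along the diagonal $p-q=P-Q$ by the multiplication rule of Section~\ref{ss_basic}, making your fixed-degree identity $\sum_{p+q=D}\|\mathbf{P}_{p,q}\riz\|_2^2=\sum_{\deg W_F=D}|c_F|^2\|W_F\|_2^2$ false as written; the estimate survives because every block carrying the word's mass satisfies $p+q\ge|p-q|=|\gamma|\ge j_m/2$, which is precisely how the paper argues, summing over $p-q=\gamma$ and using $p+q\ge p-q$.
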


\subsection{Organization of the paper}
Auxiliary results, including definitions and basic facts of harmonic analysis on $\spn$,
are collected in Section~\ref{s_aux}.
Estimates of the energy and Hausdorff dimensions of the Riesz products
on the unit sphere are obtained in Section~\ref{s_dim_riz}.
A related problem about 
Hausdorff dimensions of the pluriharmonic measures on the torus is discussed in Section~\ref{s_plh}.

\subsection{Notation} For $F, G>0$, we write $F\lesssim G$ provided that $F\le C G$ for a constant $C>0$.
If $F\lesssim G$ and $G\lesssim F$, then we write $F\approx G$.

\section{Auxiliary results}\label{s_aux}
\subsection{Basics of harmonic analysis on $\spn$}\label{ss_basic}
Let $\si=\si_n$ denote the normalized Lebesgue measure on the unit sphere $\spn$. Observe that
\begin{equation}\label{e_orth}
L^2(\si) = \operatornamewithlimits{\oplus}_{(p,q)\in \zz} H(p, q).
\end{equation}
Specific aspects of the harmonic analysis on $S$ are illustrated by the following
multiplication rule for the spaces $H(p, q)$: if $f \in H(p, q)$ and
$g \in H(r, s)$, then
\[ fg\in
\sum_{\ell=0}^L
H(p + r -\ell, q + s -\ell),
\]
where $L = \min(p, s) + \min(q, r)$.
See \cite[Chapter~12]{Ru80} for the proofs of the above facts and
for further results.

Let $M(\spn)$ denote the space of complex Borel measures on the unit sphere $\spn$.
Let $K_{p,q}(z, \za)$ denote the reproducing kernel for the Hilbert space
$H(p, q) \subset L^2(\sph)$. The polynomial
\[
\mu_{p,q}(z) = \int_S
K_{p,q} (z, \za)\, d\mu(\za), \quad z\in S,
\]
is called the $H(p, q)$-projection of $\mu\in M(S)$.
For $\mu\in M(S)$, the spectrum $\spec(\mu)$ in terms of $H(p,q)$ is defined by
\[
\spec(\mu) =
\left\{
(p, q) \in \zz:\, \mu_{p,q} \neq \mathbf{0}
\right\}.
\]


\subsection{Disintegration into slice-products}
Let $\Tbb=\sph_1$ denote the unit circle.
Given a $\xi\in \spn$,
the slice-product
\[
\riz_\xi(R, J, a)(\la) := \riz(R(\la\xi), J, a),\quad \la\in \Tbb,
\]
is the classical Riesz product
\begin{equation}\label{e_slice}
\prod_{k=1}^\infty \left(
\frac{\overline{a}_k \overline{R}_{j_k} (\xi)\overline{\la}^{j_k}}{2} + 1 +
\frac{{a}_k R_{j_k}(\xi) {\la}^{j_k}}{2}
\right),
\quad \la \in\Tbb.
\end{equation}
In particular, $\riz_\xi(R, J, a)$ is a correctly defined probability measure.

Let $\cpn$ denote
the complex projective space of dimension $n-1$,
that is, the collection of all one-dimensional
linear subspaces of $\cn$.
Let $\pi=\pi_n$ denote the canonical projection from
$\spn$ onto $\cpn$.
Observe that $\riz_{\lambda\xi}(w) = \riz_\xi (\la w)$ for $\xi\in\spn$, $\lambda, w\in\Tbb$.
Therefore, the probability slice-measure $\riz_\za$ is correctly defined for $\za\in\cpn$ as
an element of $\MM(\spn)$ supported by the unit circle $\pi^{-1}(\za) \subset \spn$.

Let $\wsn$ denote the unique probability measure on $\cpn$
 invariant with respect to all unitary transformations of $\cn$.
The following lemma shows that $\riz(R, J, a)$ is the integral of its slices.

\begin{lemma}[{\cite[Lemma~1]{D24singRiz}}]\label{l_disint}
Let $(R, J, a)$ be a Riesz triple on the sphere $S_n$, $n\ge 2$,
and let $\riz(R, J, a)$ denote the corresponding Riesz product. Then
\[
 \riz(R, J, a) = \int_{\cpn} \riz_\za(R, J, a)\, d\wsn(\za)
\]
in the following weak sense:
\[
 \int_{S_n} f\, d \riz(R, J, a) = \int_{\cpn} \int_{\spn} f\, d\riz_\za(R, J, a)\, d\wsn(\za)
\]
for all $f\in C(S_n)$.
\end{lemma}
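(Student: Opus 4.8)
The plan is to verify the weak identity by approximating $\riz(R,J,a)$ with its partial products and then disintegrating the invariant measure $\si_n$ along the fibres of the canonical projection $\pi\colon\spn\to\cpn$. Set
\[
P_N(z) = \prod_{k=1}^N\left(\frac{\overline{a}_k\overline{R}_{j_k}(z)}{2} + 1 + \frac{a_k R_{j_k}(z)}{2}\right),
\qquad z\in\spn .
\]
Since $a_k\in\Dbb$ and $\|R_{j_k}\|_{L^\infty(\sph)}=1$, each factor equals $1+\mathrm{Re}\bigl(a_k R_{j_k}(z)\bigr)>0$; hence $P_N$ is a positive polynomial, and orthogonality~\eqref{e_orth} gives $\int_{\spn}P_N\,d\si_n=1$, so that $P_N\,d\si_n$ is a probability measure whose weak* limit is, by definition, $\riz(R,J,a)$.

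Next I would invoke the standard slice-integration formula
\[
\int_{\spn} g\, d\si_n = \int_{\cpn}\int_{\Tbb} g(\la\xi)\, dm(\la)\, d\wsn(\za),
\]
valid for every $g\in C(\spn)$, where $\xi$ is any representative of $\za$ and $m$ is the normalized Lebesgue measure on $\Tbb$. Applying it with $g=fP_N$ and using the homogeneity $R_{j_k}(\la\xi)=\la^{j_k}R_{j_k}(\xi)$, I observe that
\[
P_N(\la\xi)=\prod_{k=1}^N\left(\frac{\overline{a}_k\overline{R}_{j_k}(\xi)\overline{\la}^{j_k}}{2} + 1 + \frac{a_k R_{j_k}(\xi)\la^{j_k}}{2}\right)
\]
is precisely the $N$-th partial product of the classical slice Riesz product~\eqref{e_slice}. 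Consequently the inner integral $\int_{\Tbb} f(\la\xi)P_N(\la\xi)\,dm(\la)$ is the integral of $f$ against the $N$-th partial slice measure, and the asserted identity holds at each finite level $N$.

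It then remains to let $N\to\infty$. On the left, $\int_{\spn} fP_N\,d\si_n\to\int_{\spn} f\,d\riz(R,J,a)$ by weak* convergence. On the right, the lacunarity $j_{k+1}/j_k\ge 3$ makes each slice a genuine classical Riesz product, so for every fixed $\za$ the inner integrals converge to $\int_{\spn} f\,d\riz_\za(R,J,a)$; measurability of this limit in $\za$ is inherited from the approximating integrands, which depend continuously on $\za$. The main obstacle is to justify interchanging this limit with the outer integral over $\cpn$. This is handled by the uniform bound
\[
\left|\int_{\Tbb} f(\la\xi)P_N(\la\xi)\, dm(\la)\right|\le \|f\|_{L^\infty(\spn)}\int_{\Tbb}P_N(\la\xi)\,dm(\la)=\|f\|_{L^\infty(\spn)},
\]
where the last equality records that each partial slice measure is a probability measure. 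Bounded convergence now delivers the claimed weak identity.
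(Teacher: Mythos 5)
Your proof is correct: the positivity and mean one of the partial products $P_N$, the slice-integration formula for $\si_n$ over the fibration $\pi\colon\spn\to\cpn$, the weak* convergence of the classical Riesz products on each slice, and the bounded convergence step (with the uniform bound $\|f\|_{L^\infty(\spn)}$ coming from the partial slice measures being probability measures) together give a complete argument. The paper itself only cites this lemma from \cite[Lemma~1]{D24singRiz} without reproducing the proof, and your disintegration-plus-partial-products argument is precisely the standard proof behind that citation, so there is nothing to flag.
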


\section{Dimensions of Riesz products}\label{s_dim_riz}

\subsection{Energy dimension}
Let $\MM(\spn)$ denote the set of all finite positive Borel measures on $\spn$.
Given a measure $\mu\in \MM(S)$, its $t$-energy $I_t(\mu)$, $t>0$, is defined as
\[
I_t(\mu) := \int_S \int_S \frac{d\mu(x)d\mu(y)}{|x-y|^t}.
\]
We say that $d$ is the energy dimension of $\mu$
and we write $\dim_\EE \mu =d$ if
\[
d = \sup \left\{t : I_t(\mu) < \infty\right\}.
\]

To estimate the energy dimension of $\riz(R, J, a)$,
we need the following theorem.

\begin{thm}[{\cite[Theorem 3.1]{HR03Ark}}]\label{t_RH_Esph}
For each $t$, $0<t<2n-1$, there are constants $C_1, C_2 > 0$ such that
\[
C_1 I_t(\mu) \le \|\mu_{0,0}\|_2^2  + \sum_{j=1}^\infty j^{t-2n+1} \sum_{p+q=j} \|\mu_{p,q}\|_2^2
\le C_2 I_t(\mu)
\]
for all $\mu\in\MM(\spn)$ with
$n \ge 2$.
\end{thm}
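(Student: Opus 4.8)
The plan is to diagonalize the energy integral against the orthogonal decomposition \eqref{e_orth}. The Riesz kernel $|x-y|^{-t}$ depends only on $\langle x,y\rangle$ (indeed $|x-y|^2 = 2 - 2\,\mathrm{Re}\,\langle x,y\rangle$ on $\spn$), hence it is invariant under the diagonal action of $\mathcal{U}(n)$ on $\spn\times\spn$ and admits a zonal expansion in the reproducing kernels,
\[
|x-y|^{-t} = \sum_{(p,q)\in\zz} c_{p,q}(t)\, K_{p,q}(x,y),
\]
with real coefficients obeying $c_{p,q}(t) = c_{q,p}(t)$ (the kernel is real and symmetric and $\overline{K_{p,q}} = K_{q,p}$). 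First I would justify this expansion and its convergence, and record that the coefficients are nonnegative, which follows from the positive-definiteness of the Riesz kernel in the range $0<t<2n-1$.

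Next, integrating the expansion term by term against $d\mu\otimes d\mu$ and using that $\mu\in\MM(\spn)$ is positive, the reproducing and idempotency properties of $K_{p,q}$ give
\[
\int_S\int_S K_{p,q}(x,y)\, d\mu(x)\, d\mu(y) = \|\mu_{p,q}\|_2^2,
\]
and therefore the exact Parseval-type identity
\[
I_t(\mu) = \sum_{(p,q)\in\zz} c_{p,q}(t)\,\|\mu_{p,q}\|_2^2.
\]
Granting the two-sided estimate $c_{p,q}(t) \approx j^{t-2n+1}$ for $j = p+q \ge 1$, uniformly over all splittings $p+q=j$, together with $c_{0,0}(t) < \infty$, one simply groups the terms with $p+q=j$ and reads off both inequalities of the theorem.

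The coefficient asymptotics are the main obstacle. Conceptually $c_{p,q}(t)$ is controlled by the strength of the diagonal singularity $|x-y|^{-t}$ on the $(2n-1)$-dimensional manifold $\spn$, so it should behave like $(\mathrm{frequency})^{t-(2n-1)}$ with the frequency equal to $j=p+q$. The cleanest rigorous route is to identify $I_t(\mu)$ with the square of a negative-order Sobolev norm $\|\mu\|_{H^{-s}}^2$ on $\spn$, where $s = (2n-1-t)/2$: the Laplace--Beltrami operator acts on the real spherical-harmonic space $\mathcal{H}_j = \bigoplus_{p+q=j} H(p,q)$ as a scalar $\lambda_j \approx j^2$, so each block receives the common weight $(1+\lambda_j)^{-s} \approx j^{t-2n+1}$. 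This simultaneously yields the exponent and the uniformity over the splittings $p+q=j$, precisely because all $H(p,q)$ with fixed $p+q$ sit inside a single eigenspace. Alternatively one can compute $c_{p,q}(t)$ directly as an integral of $|x-y|^{-t}$ against the disk (Zernike--Jacobi) polynomials that realize the zonal harmonics $K_{p,q}$, and extract the asymptotics from known estimates for those polynomials; on this route the uniformity is the delicate point, since a priori $c_{p,q}(t)$ could depend on $p$ and $q$ separately rather than only on $j = p+q$.
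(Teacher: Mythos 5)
The paper itself offers no proof of this statement: it is imported verbatim from \cite{HR03Ark}, so your sketch can only be measured against that source. Your plan is essentially the correct reconstruction: Hare and Roginskaya likewise diagonalize the energy form by expanding the invariant kernel over the multiplicity-free decomposition \eqref{e_orth}, obtain the Parseval-type identity $I_t(\mu)=\sum_{(p,q)\in\zz}c_{p,q}(t)\|\mu_{p,q}\|_2^2$ for positive measures, and then estimate the coefficients. One point you flag as ``the delicate point'' of the direct route --- that $c_{p,q}(t)$ might depend on $p$ and $q$ separately --- is in fact vacuous, and noticing why streamlines everything: since $|x-y|^{-t}=(2-2\,\mathrm{Re}\,\langle x,y\rangle)^{-t/2}$ depends only on the real part of the inner product, the kernel is invariant under the full orthogonal group $O(2n)$, not merely $\mathcal{U}(n)$, so its zonal expansion proceeds in the real degree-$j$ kernels $Z_j=\sum_{p+q=j}K_{p,q}$ and $c_{p,q}(t)$ automatically depends only on $j=p+q$. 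Combined with your (correct) observations that each $H(p,q)$ sits in the degree-$(p+q)$ Laplace--Beltrami eigenspace and that $\|\mu_j\|_2^2=\sum_{p+q=j}\|\mu_{p,q}\|_2^2$ by orthogonality, this reduces the complex statement exactly to the classical real-sphere energy formula on $S^{2n-1}$. Two obligations then remain and should not be waved at: first, the coefficient asymptotics $c_j(t)\approx j^{t-2n+1}$ must come from the Gegenbauer expansion of $(2-2\cos\theta)^{-t/2}$ (a known but nontrivial computation); your alternative of simply identifying $I_t(\mu)$ with a negative Sobolev norm is essentially a restatement of the theorem in its real form, so invoking it without a citation or proof would be circular. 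Second, the term-by-term integration against $d\mu\otimes d\mu$ needs justification when $I_t(\mu)=\infty$; here the nonnegativity of the coefficients (which you correctly derive from positive definiteness of the Riesz kernel in the range $0<t<2n-1$, using that the decomposition is multiplicity-free) permits a monotone-convergence argument via smoothing, giving both inequalities in the extended sense. With those two steps filled in, your argument is complete and, if anything, slightly cleaner than a splitting-by-splitting estimate of disk-polynomial coefficients.
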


In \cite{HR03PAMS}, an analog of Theorem~\ref{t_Riesz_Edim} for the unit circle is used
to compute the energy dimension of a classical Riesz product on $\Tbb$.
In the following result, we apply Theorem~\ref{t_RH_Esph}
to estimate from below the energy dimension of a Riesz product on the sphere.
In fact, modulo technical details, the principal argument below is similar to that from the proof 
of Theorem~3.1 from \cite{HR03PAMS}.

\begin{thm}\label{t_Riesz_Edim}
Let $(R, J, a)$ be a Riesz triple.
Then
\[
\dim_\EE \riz(R, J, a) \ge 2n-1-\al_0,
\]
where
\begin{equation}\label{e_a0}
\al_0 = \max
\left[0,\
\limsup_{k\to \infty}
\left(
\frac{
\log \frac{|a_k|^2}{2} +
\sum_{\ell=1}^{k-1}
\log \left(1 + \frac{|a_\ell|^2}{2} \right)
}
{
\log j_k}
\right)
\right].
\end{equation}
In particular,
\[
\dim_\EE \riz (R, J, a) \ge 2n -1 - \limsup_{k\to \infty}
\left(
\frac{1}{2\log j_k}
\sum_{\ell=1}^{k-1}
|a_\ell|^2 
\right).
\]
\end{thm}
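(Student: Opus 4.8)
The plan is to apply Theorem~\ref{t_RH_Esph} to $\mu=\riz(R,J,a)$ and bound from above the spectral sum that controls $I_t(\mu)$. Since $\dim_\EE\mu=\sup\{t:I_t(\mu)<\infty\}$, it suffices to prove that
\[
\Sigma(t):=\sum_{j=1}^\infty j^{\,t-2n+1}S_j<\infty,\qquad S_j:=\sum_{p+q=j}\|\riz_{p,q}\|_2^2,
\]
for every $t<2n-1-\al_0$; such $t$ automatically satisfy $t<2n-1$ because $\al_0\ge 0$, so Theorem~\ref{t_RH_Esph} does apply. The whole problem thus reduces to estimating the $L^2$-masses $S_j$ of the homogeneous components of $\riz$.

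First I would expand the Riesz product. Writing each factor as $1+\tfrac{a_kR_{j_k}}{2}+\tfrac{\overline a_k\overline R_{j_k}}{2}$ and multiplying out (the partial products converge weak-$*$, and the projections onto the finite-dimensional spaces $H(p,q)$ pass to the limit), every term is indexed by a pair of disjoint finite sets $P,Q\subset\Nbb$, with coefficient $c_{P,Q}=\prod_{k\in P}\tfrac{a_k}{2}\prod_{k\in Q}\tfrac{\overline a_k}{2}$ and polynomial part $\Phi_{P,Q}=\prod_{k\in P}R_{j_k}\prod_{k\in Q}\overline R_{j_k}$. By the multiplication rule of Subsection~\ref{ss_basic}, $\Phi_{P,Q}\in\sum_{\ell}H(p_P-\ell,q_Q-\ell)$, where $p_P=\sum_{k\in P}j_k$, $q_Q=\sum_{k\in Q}j_k$, and $0\le\ell\le\min(p_P,q_Q)$. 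The decisive structural point is that the lacunarity $j_{k+1}/j_k\ge 3$ makes $\{j_k\}$ dissociate: every integer has at most one representation $\sum_k\varepsilon_kj_k$ with $\varepsilon_k\in\{-1,0,1\}$. Hence $p_P-q_Q$ determines $(P,Q)$ uniquely, so for each fixed bidegree $(p,q)$ there is at most one $(P,Q)$ (the one with $p_P-q_Q=p-q$ and $p_P\ge p$) whose expansion reaches $H(p,q)$, and then $\riz_{p,q}=c_{P,Q}\,[\Phi_{P,Q}]_{(p,q)}$, where $[\,\cdot\,]_{(p,q)}$ denotes the $H(p,q)$-projection. This is the exact analogue of the ``one frequency per term'' phenomenon on $\Tbb$ and yields genuine orthogonality across distinct $(P,Q)$.

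With this in hand the estimate is routine. Fix $(P,Q)$ and let $k=\max(P\cup Q)$. Since $\sum_{\ell<k}j_\ell\le j_k/2$, every total degree $p_P+q_Q-2\ell$ carried by $\Phi_{P,Q}$ lies in $[\,j_k/2,\,3j_k/2\,]$, so each factor $j^{\,t-2n+1}$ that occurs is comparable to $j_k^{\,t-2n+1}$; summing the orthogonal contraction components by Pythagoras gives $\sum_\ell\|[\Phi_{P,Q}]_{(p_P-\ell,q_Q-\ell)}\|_2^2=\|\Phi_{P,Q}\|_2^2\le\|\Phi_{P,Q}\|_\infty^2\le 1$, the last bound using $\|R_j\|_{L^\infty(\sph)}=1$. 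Collecting terms by their largest index $k$ and using $\sum_{\max(P\cup Q)=k}|c_{P,Q}|^2=\tfrac{|a_k|^2}{2}\prod_{\ell<k}\bigl(1+\tfrac{|a_\ell|^2}{2}\bigr)=:d_k$, I obtain
\[
\Sigma(t)\ \lesssim\ 1+\sum_{k=1}^\infty j_k^{\,t-2n+1}d_k .
\]
The main obstacle is precisely this passage from $\Tbb$ to $\spn$: on the circle each term carries a single frequency, whereas the multiplication rule here spreads $\Phi_{P,Q}$ over the contraction levels $\ell$, and one must check that these lower-order terms neither destroy orthogonality nor leave the scale $j\approx j_k$ — which is exactly where dissociation and the bound $\sum_{\ell<k}j_\ell\le j_k/2$ are used.

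Finally, since $\log(j_k^{\,t-2n+1}d_k)/\log j_k=(t-2n+1)+\log d_k/\log j_k$ has $\limsup$ at most $t-2n+1+\al_0<0$ for $t<2n-1-\al_0$, the lacunarity $j_k\ge 3^{k-1}$ forces the tail to be dominated by a convergent geometric series; hence $\Sigma(t)<\infty$ and $\dim_\EE\riz\ge 2n-1-\al_0$. The ``in particular'' estimate follows from $\log(1+x)\le x$ applied to each factor of $d_k$, giving $\al_0\le\limsup_k\tfrac{1}{2\log j_k}\sum_{\ell<k}|a_\ell|^2$ (this last $\limsup$ being nonnegative, so the stated bound is meaningful).
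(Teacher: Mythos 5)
Your proof is correct and follows essentially the same route as the paper: your grouping of the expansion by pairs $(P,Q)$ is in bijection (via dissociativity of the lacunary sequence) with the paper's grouping by frequencies $\gamma\in\Gamma_k$, and both arguments combine Theorem~\ref{t_RH_Esph} with the multiplication rule, orthogonality of the spaces $H(p,q)$, the sup-norm bound $\|R_j\|_\infty=1$, and a final geometric-series estimate from $j_k\ge 3^{k-1}$. Your degree localization $p+q\in[j_k/2,3j_k/2]$ and the identity $\sum_{\max(P\cup Q)=k}|c_{P,Q}|^2=d_k$ are exactly the paper's steps $p+q\ge p-q$, $|\gamma|\approx j_k$, and the bound \eqref{e_gamma2}, just written out more explicitly.
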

\begin{proof}
 Let
$\riz:= \riz(R, J, a)$.
For $k=1,2,\dots$, let
\[
\Gamma_k := \left\{\pm j_k +
\sum_{\ell=1}^{k-1} \er_\ell j_\ell: \er_\ell = 0, \pm 1\right\}.
\]
Suppose that $\gamma\in \Gamma_k$. Without loss of generality, assume that $\gamma>0$.
Then there exists exactly one representation
\[
\gamma = j_k + \sum_{\ell=1}^{k-1} \er_\ell j_\ell.
\]
By the multiplication rule for the spherical harmonics (see Section~\ref{ss_basic}),
\[
\sum_{p-q=\gamma} \riz_{p,q} = R_{j_k} \prod_{\ell=1}^{k-1} R_{j_\ell}(\er_\ell),
\]
where
\[
\begin{split}
  R_{j_\ell}(\er_\ell) &= R_{j_\ell}, \quad \er_\ell=1,\\
   R_{j_\ell}(\er_\ell) &= 1, \quad \quad \er_\ell=0, \\
   R_{j_\ell}(\er_\ell) &= \overline{R}_{j_\ell}, \quad \er_\ell=-1.
\end{split}
\]
Applying \eqref{e_orth}, we obtain
\[
\sum_{p-q=\gamma}\|\riz_{p,q}\|_2^2 
= \left\| R_{j_k} \prod_{\ell=1}^{k-1} R_{j_\ell}(\er_\ell)\right\|^2_2
\le
\left(
\frac{|a_k|}{2}\prod_{\ell:\, \er_\ell\neq 0}\frac{|a_\ell|}{2}
\right)^2,
\]
where the empty product is equal to one.

Now, fix an $\al> \al_0$. Since $p+q \ge p-q$, we have
\begin{equation}\label{e_gamma1}
\sum_{\gamma\in \Gamma_k}
|p+q|^{-\alpha} \sum_{p-q=\gamma} \|\riz_{p,q}\|_2^2 \le
 \sum_{\gamma\in \Gamma_k}
|\gamma|^{-\alpha} \sum_{p-q=\gamma} \|\riz_{p,q}\|_2^2.
\end{equation}
If $\gamma \in \Gamma_k$, then $|\gamma| \approx j_k$. Thus,
\begin{equation}\label{e_gamma2}
\sum_{\gamma\in \Gamma_k}
|\gamma|^{-\alpha} \sum_{p-q=\gamma} \|\riz_{p,q}\|_2^2 \lesssim j_k^{-\alpha}\frac{|a_k|^2}{2}
\prod_{\ell=1}^{k-1}
\left(1 +\frac{|a_\ell|^2}{2} \right).
\end{equation}
Combining \eqref{e_gamma1} and \eqref{e_gamma2}, we obtain
\begin{equation}\label{e_energy_upper}
I_{2n-1-\alpha}(\Pi) \lesssim 1+ j_k^{-\alpha}\frac{|a_k|^2}{2}
\prod_{\ell=1}^{k-1}
\left(1 +\frac{|a_\ell|^2}{2} \right)
\end{equation}
by Theorem~\ref{t_RH_Esph}.

We have $\log j_k \ge (k-1)\log 3$, thus,
selecting an $A$, $0<A<1$, sufficiently close to~$1$,
we deduce from the inequality $\al> \al_0$ that
\[
\al\ge
\frac{
\log \frac{|a_k|^2}{2} +
\sum_{\ell=1}^{k-1}
\log \left(1 + \frac{|a_\ell|^2}{2} \right)
+ k |\log A|}
{
\log j_k}
\]
or, equivalently,
\[
j_k^{-\al}
\frac{|a_k|^2}{2}
\prod_{\ell=1}^{k-1}
\left(
1 +
\frac{|a_\ell|^2}{2}
\right)
\le A^k.
\]
Combining the above property and \eqref{e_energy_upper}, we conclude that $I_{2n-1-\al}(\riz) < \infty$.
Since $\al>\al_0$ is arbitrary, we conclude that $\dim_\EE \riz \ge 2n-1-\al_0$, as required.
\end{proof}

\subsection{Hausdorff dimension}

For a Borel set $E$, let $\dim_\Hau E$ denote its Hausdorff dimension.
The Hausdorff dimension of a measure $\mu\in\MM(\spn)$ is defined as
\[
\dim_\Hau \mu = \inf \{\dim_\Hau E : E\ \textrm{is a Borel set with}\ \mu(E) > 0\}.
\]
For properties of the Hausdorff dimension of a measure see \cite[Chapter~10]{Fa97}.

Given a Riesz product $\riz=\riz(R, J, a)$, direct inspection shows that there exists a sufficiently small $\er>0$ such that
$\riz_{p,q}=\mathbf{0}$ provided that 
\[
\left|\frac{p}{q} -1\right|<\er,\quad (p,q)\neq (0,0).
\]
Therefore, Theorem~1.1 from \cite{AW22} applies to $\riz$ and guarantees that 
$\dim_\Hau \riz \ge 2n-2$.
In fact, applying Theorem~\ref{t_Riesz_Edim}, we obtain a more precise estimate.

\subsubsection{Application of Theorem~\ref{t_Riesz_Edim}}
If $I_t(\mu) <\infty$, then $\dim_\Hau \mu > t$ (cf.\ \cite[Section~4.3]{Fa90}); thus, the Hausdorff dimension of a
measure is always at least the energy dimension.

\begin{cor}\label{c_Haus}
Let $(R, J, a)$ be a Riesz triple.
Then 
\[
\dim_\Hau \riz(R, J, a) \ge 2n-1 - \alpha_0,
\]
where $\al_0$ is defined by \eqref{e_a0}.
\end{cor}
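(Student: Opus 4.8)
The plan is to obtain the Hausdorff estimate directly from the energy estimate already established in Theorem~\ref{t_Riesz_Edim}, invoking the general principle—recorded just above the corollary—that the Hausdorff dimension of a measure dominates its energy dimension.

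First I would make this comparison explicit. For any $\mu\in\MM(\spn)$, the cited fact states that $I_t(\mu)<\infty$ implies $\dim_\Hau\mu > t$. Hence, for every $t$ belonging to the set $\{t : I_t(\mu)<\infty\}$ we have $\dim_\Hau\mu > t$, so $\dim_\Hau\mu$ is an upper bound for this set and therefore dominates its supremum:
\[
\dim_\Hau\mu \ge \sup\{t : I_t(\mu)<\infty\} = \dim_\EE\mu .
\]
The only thing to check before applying this is that $\riz = \riz(R,J,a)$ is a genuine finite positive Borel measure, which it is: it is a probability measure, as recorded after Definition~\ref{d_Rpair} and used in Lemma~\ref{l_disint}, so it lies in $\MM(\spn)$ and the comparison applies to it.

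Next I would simply combine this with Theorem~\ref{t_Riesz_Edim}, which gives $\dim_\EE\riz \ge 2n-1-\al_0$ with $\al_0$ as in \eqref{e_a0}. Chaining the two inequalities yields
\[
\dim_\Hau\riz \ge \dim_\EE\riz \ge 2n-1-\al_0,
\]
which is exactly the assertion of the corollary.

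There is essentially no obstacle here, and I would expect the proof to be a single short paragraph. All the analytic content—the spectral-gap structure of the Riesz product encoded in the sets $\Gamma_k$, the reproducing-kernel energy formula of Theorem~\ref{t_RH_Esph}, and the summability estimate forcing $I_{2n-1-\al}(\riz)<\infty$ for every $\al>\al_0$—has already been carried out in the proof of Theorem~\ref{t_Riesz_Edim}. The corollary merely transports that conclusion across the elementary energy–Hausdorff inequality, so the remaining work is purely formal.
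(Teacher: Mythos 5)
Your proof is correct and matches the paper's argument exactly: the paper's proof is the one-line observation that $\dim_\Hau \riz \ge \dim_\EE \riz$ (justified by the fact that $I_t(\mu)<\infty$ implies $\dim_\Hau\mu > t$) combined with Theorem~\ref{t_Riesz_Edim}. Your additional verification that $\riz(R,J,a)$ is a probability measure in $\MM(\spn)$ is a harmless, sensible extra check.
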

\begin{proof}
Since $\dim_\Hau \riz \ge \dim_{\EE} \riz$, it suffices to apply Theorem~\ref{t_Riesz_Edim}.
 \end{proof}
 
\begin{rem}
 Clearly, Theorem~\ref{t_Haus} is a particular case of the above corollary.
 \end{rem}
 
\subsubsection{Reduction to slice-products}
An alternative approach to Corollary~\ref{c_Haus} is to consider the slice-products 
of $\riz(R, J, a)$.
We need the following theorem which is a consequence of \cite[Sections~2.10.2 and 2.10.17]{Fe69}.

\begin{thm}\label{t_federer}
Let $K\subset \cpn \times \Tbb$ be a compact set and $K_\za = \{w\in\Tbb: (\za, w) \in K\}$.
Assume that $\dim_\Hau K_\za > \beta$ for $\za\in X\subset\cpn$.
If $\wsn(X)>0$, then 
\[
\dim_\Hau K \ge 2n-2+\beta.
\]
\end{thm}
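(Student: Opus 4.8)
The plan is to deduce the estimate from a Fubini-type slicing inequality for Hausdorff measures, applied to the coordinate projection $\pi\colon \cpn\times\Tbb\to\cpn$, $(\za,w)\mapsto\za$, whose fibre over $\za$ is the circle $\{\za\}\times\Tbb$ and satisfies $K\cap\pi^{-1}(\za)=\{\za\}\times K_\za$. Since $\cpn\times\Tbb$ is a compact smooth manifold, I would first fix a bi-Lipschitz embedding into a Euclidean space, so that $\pi$ becomes a Lipschitz map with circle fibres and the results of \cite{Fe69} apply. Throughout, $\mathcal{H}^{s}$ denotes $s$-dimensional Hausdorff measure.

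The inequality I would invoke (see \cite[Sections~2.10.2 and 2.10.17]{Fe69}) asserts the existence of a constant $C=C(\beta,n)>0$ such that
\[
\int_{\cpn}^{*}\mathcal{H}^{\beta}\bigl(K\cap\pi^{-1}(\za)\bigr)\,d\mathcal{H}^{2n-2}(\za)
\le C\,\mathcal{H}^{2n-2+\beta}(K),
\]
where $\int^{*}$ is the upper integral, so that no measurability of the integrand is needed. Because the slice $w\mapsto(\za,w)$ is an isometric embedding of $\Tbb$, the fibre term equals $\mathcal{H}^{\beta}(K_\za)$. Moreover, $\cpn$ is a smooth compact Riemannian manifold of real dimension $2n-2$ and $\wsn$ is its normalized volume, whence $\wsn\approx\mathcal{H}^{2n-2}$ on $\cpn$; in particular the hypothesis $\wsn(X)>0$ forces $\mathcal{H}^{2n-2}(X)>0$.

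With this in hand the argument is short. For $\za\in X$ the assumption $\dim_\Hau K_\za>\beta$ yields $\mathcal{H}^{\beta}(K_\za)=+\infty$, since the Hausdorff measure in any dimension strictly below the Hausdorff dimension is infinite. Hence the integrand on the left-hand side is identically $+\infty$ on the set $X$ of positive $\mathcal{H}^{2n-2}$-measure, so the upper integral equals $+\infty$. By the displayed inequality this gives $\mathcal{H}^{2n-2+\beta}(K)=+\infty$, and in particular $\mathcal{H}^{2n-2+\beta}(K)>0$, which is exactly the assertion $\dim_\Hau K\ge 2n-2+\beta$.

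The point that requires care is the precise form and applicability of the slicing inequality from \cite{Fe69}: one must confirm that the chosen metric makes $\pi$ Lipschitz with circle fibres, identify the fibre Hausdorff measure with $\mathcal{H}^{\beta}(K_\za)$, and verify that integration over the base is against a measure comparable to $\wsn$. Once this bookkeeping is settled, the upper integral removes any measurability difficulty, and no further estimates are needed.
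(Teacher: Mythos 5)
Your proposal is correct and takes essentially the same route as the paper: the paper gives no independent argument for Theorem~\ref{t_federer}, deducing it directly from the Eilenberg-type slicing inequality of \cite[Sections~2.10.2 and 2.10.17]{Fe69}, which is precisely the inequality you invoke. Your supporting details --- the bi-Lipschitz embedding making $\pi$ Lipschitz with isometric circle fibres, the comparability $\wsn\approx\mathcal{H}^{2n-2}$ on $\cpn$, and the use of the upper integral to avoid measurability issues, together with the standard facts that $\dim_{\Hau}K_\za>\beta$ forces $\mathcal{H}^{\beta}(K_\za)=+\infty$ and that $\mathcal{H}^{2n-2+\beta}(K)>0$ forces $\dim_{\Hau}K\ge 2n-2+\beta$ --- correctly fill in exactly the bookkeeping that the paper delegates to the citation.
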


Fix an $\al> \al_0$. It suffices to prove that
\begin{equation}\label{e_ge_al}
\dim_{\Hau} \riz(R, J, a) \ge 2n -1 -\al.
\end{equation}
Assume that the above estimate does not hold. Then there exists a compact $E\subset\spn$
such that $\riz(E)>0$ and 
\[
\dim_{\Hau} E < 2n -1 - \al.
\]
We identify $\spn$ and $\cpn \times \Tbb$, and we consider the sets 
$E_\za = \{w\in\Tbb: (\za, w) \in E\}$.
We claim that
\begin{equation}\label{e_le_1al} 
\dim_\Hau E_\za \le 1-\alpha
\quad\textrm{
for $\wsn$-almost all $\za\in\cpn$.}
\end{equation}
Indeed, if this is not the case, then $\dim_\Hau E_\za > 1-\alpha$
for $\za\in X$, where $\wsn(X)>0$.
Applying Theorem~\ref{t_federer}, we obtain $\dim_{\Hau} E \ge 2n -1 - \al$,
a contradiction. Thus, \eqref{e_le_1al} holds.

Since $\riz(E)>0$, Lemma~\ref{l_disint} guarantees that $\riz_\za(E_\za)>0$
for $\za\in Y_0$ with $\wsn(Y_0)>0$.
Combining this fact and \eqref{e_le_1al}, we obtain 
\begin{equation}\label{e_le_slice_1al}
\dim_\Hau \riz_\za \le 1-\al
\quad\textrm{
for $\za\in Y$ with $\wsn(Y)>0$}.
\end{equation}
Now, recall that $\riz_\za(R, J, a)$ is the classical Riesz product defined by \eqref{e_slice}.
Therefore,
$\dim_\Hau \riz_\za(R, J, a) \ge 1 - \alpha_0 > 1- \alpha$ by \cite[Theorem~3.1]{HR03PAMS}.
This contradicts \eqref{e_le_slice_1al}. Thus, \eqref{e_ge_al} holds true
for any $\al> \al_0$, as required.

In other words, consideration of slice-products allows to recover Corollary~\ref{c_Haus}
from \cite[Theorem~3.1]{HR03PAMS}, the corresponding result for the classical Riesz products.

\section{Hausdorff dimensions of pluriharmonic measures}\label{s_plh}
This section is motivated by analogs of Lemma~\ref{l_disint} and their applications
in the setting of pluriharmonic measures.

\subsection{Pluriharmonic measures on the unit sphere}
A measure $\mu\in M(\spn)$ is called pluriharmonic if 
$\spec(\mu) \subset \{(p,q)\in \zz: pq=0\}$.
An analog of Lemma~\ref{l_disint} is known for the pluriharmonic measures on $\spn$.
This fact implies that $\dim_\Hau \mu \ge 2n-2$ for any pluriharmonic measure $\mu\in M(\spn)$,
see \cite{Aab85, AW22} for further details.
However, to the best of the author's knowledge, the sharpness of this estimate is an open problem.

\subsection{Pluriharmonic measures on the torus}
A measure $\mu\in M(\tn)$ is called pluriharmonic if
$\hat{\mu}(k_1, \dots, k_n) = 0$ for 
$(k_1, \dots, k_n) \in \mathbb{Z}^n \setminus (\mathbb{Z}_-^n \cup \mathbb{Z}_+^n)$.
It is well known that $\mu\in M(\tn)$ is pluriharmonic if and only if the Poisson integral of $\mu$
is a pluriharmonic function in the polydisk $\Dbb^n$.
Therefore, an analog of Lemma~\ref{l_disint} holds true for every pluriharmonic measure
 $\mu$ on $\tn$.
 See \cite[Proposition~2.1]{AD20} for the case of $\mu$ on the unit sphere $\spn$,
 see also \cite{Cz24ppt}, where 
 pluriharmonic measures on the Shilov boundary of a bounded symmetric domain are considered.
Similar to the case of $\spn$, we conclude that 
\begin{equation}\label{e_hau_n1}
\dim_\Hau \mu \ge n-1;
\end{equation}
cf.\ \cite[Corollary~3.3]{Cz24ppt}.
For $n=2$, estimate \eqref{e_hau_n1} and its sharpness were earlier proved in \cite{Bq23ppt}.
Simple examples show that estimate \eqref{e_hau_n1} is sharp for all $n\ge 2$. 
Indeed, put 
\[
\mu = \de_{1}(\xi_1) \otimes m(\xi_2)\otimes \cdots \otimes m(\xi_n),
\]
where $\de_{1}$ denotes the Dirac measure at $1\in \Tbb$ 
and $m = \si_1$ denotes the normalized Lebesgue measure on $\Tbb$.
Clearly, $\mu$ is a pluriharmonic measure and $\dim_\Hau \mu = n-1$.


\end{document}